\let\mathcal\mathscr
\def\bB{{\mathbb B}}
\def\bC{{\mathbb C}}
\def\bH{{\mathbb H}}
\def\bR{{\mathbb R}}
\def\bQ{{\mathbb Q}}
\def\bZ{{\mathbb Z}}
\newtheorem{thm}{Theorem}[section]
\def\tilde{\widetilde}
\def\phi{\varphi}
\numberwithin{equation}{section}
\newtheorem{conjecture}[thm]{Conjecture}
\newtheorem{proposition}[thm]{Proposition}
\begin{document}

\title {Hyperbolicity, Automorphic forms and Siegel modular varieties}

\author{Erwan Rousseau}
\thanks{Partially supported by the ANR project \lq\lq POSITIVE\rq\rq{}, ANR-2010-BLAN-0119-01.
This work has been carried out in the framework of the Labex Archim\`ede (ANR-11-LABX-0033) and of the A*MIDEX project (ANR-11-IDEX-0001-02), funded by the ``Investissements d'Avenir" French Government programme managed by the French National Research Agency (ANR)}
\keywords{entire curves, Kobayashi hyperbolicity, bounded symmetric domains, automorphic forms.}
\subjclass[2010]{Primary: 32Q45, 32M15; Secondary: 11G99, 14K15.}
\date{}

\begin{abstract}
We study the hyperbolicity of compactifications of quotients of bounded symmetric domains by arithmetic groups. We prove that, up to a finite \'etale cover, they are Kobayashi hyperbolic modulo the boundary. Applying our techniques to Siegel modular varieties, we improve some former results of Nadel on the non-existence of certain level structures on abelian varieties over complex function fields.

\end{abstract}

\maketitle

\section{Introduction}
Any complex space $Y$ can be equipped with an intrinsic pseudo-distance $d_Y$, the Kobayashi pseudo-distance \cite{kob}. It is the largest pseudo-distance such that every holomorphic map $f: \Delta \to Y$ from the unit disc, equipped with the Poincar\'e metric, is distance decreasing.

Recall \cite{kob} that a complex space $Y$ is said to be (Kobayashi) hyperbolic modulo $W \subset Y$ if for every pair of distinct points $p,q$ of $Y$ we have $d_Y(p,q)>0$ unless both are contained in $W$. 
In the case where $Y$ is compact and $W=\emptyset$, Brody's lemma gives a criterion for hyperbolicity: $Y$ is hyperbolic if and only if there are no curves $f: \bC \to Y$.

In the case where $W \neq \emptyset$, we do not know an analogue of Brody's criterion for hyperbolicity modulo $W$. It is not known whether $Y$ is necessarily hyperbolic modulo $W$ if it is Brody hyperbolic modulo $W$ i.e. every entire curve $f: \bC \to Y$ has image in $W$. The reason is that the classical Brody's lemma does not provide any information on the image of the entire curve.

In this article, we investigate hyperbolic properties of compactifications of quotients of bounded symmetric domains by arithmetic groups. It is well known that such quotients may be far from being hyperbolic, as rational Hilbert modular surfaces show.

Let $X:=\bB/ \Gamma$ be a quotient of a bounded symmetric domain by an arithmetic subgroup $\Gamma \subset \operatorname{\operatorname{Aut}}(\bB)$. By passing to a subgroup of finite index if necessary, we will always suppose that $\Gamma$ is neat \cite{B}. In particular, $X$ admits a smooth toroidal compactification $\overline{X}$ where $D=\overline{X} \setminus X$ is a normal crossings divisor \cite{AMRT}. Moreover, $\overline{X}$ may be chosen to be projective \cite{AMRT}.

We obtain the following result
\begin{thm}\label{hypquot}
Let $X:=\bB/ \Gamma$ be a quotient of a bounded symmetric domain by a neat arithmetic subgroup $\Gamma \subset \operatorname{Aut}(\bB)$. Then there exists a finite \'etale cover $X_1=\bB/ \Gamma_1$ determined by a subgroup $\Gamma_1 \subset \Gamma$ such that $\overline{X_1}$, a smooth projective compactification of $X_1$, is Kobayashi hyperbolic modulo $D_1:=\overline{X_1} \setminus X_1$.
\end{thm}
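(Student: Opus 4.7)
The strategy is first to choose $\Gamma_1 \subset \Gamma$ deep enough so that $X_1 = \bB/\Gamma_1$ is smooth and admits a smooth toroidal compactification $\overline{X_1}$ with simple normal crossing boundary $D_1$, and then to construct a continuous Hermitian length function $H$ on $T_{\overline{X_1}}$, positive away from $D_1$, which is dominated by the Kobayashi infinitesimal pseudo-metric on $X_1$. A standard criterion (Kobayashi--Kiernan) then yields that $X_1$ is hyperbolically imbedded in $\overline{X_1}$, which in particular implies Kobayashi hyperbolicity of $\overline{X_1}$ modulo $D_1$.

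First, by Borel's lemma, I replace $\Gamma$ by a neat torsion-free normal subgroup $\Gamma_1$ of finite index. The quotient $X_1$ is then smooth and, by the toroidal compactification theory of Ash--Mumford--Rapoport, admits a smooth compactification $\overline{X_1}$ in which $D_1 := \overline{X_1} \setminus X_1$ is a simple normal crossing divisor. The Bergman metric $\omega_B$ on $\bB$ is $\Aut(\bB)$-invariant and so descends to a complete K\"ahler--Einstein metric on $X_1$ whose holomorphic sectional curvature is bounded above by a negative constant. The Ahlfors--Schwarz lemma then gives $F_{X_1} \geq c \, \|\cdot\|_{\omega_B}$ for some $c>0$, so $X_1$ is already Kobayashi hyperbolic; the real issue is to extend this control uniformly across $D_1$.

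The key construction is to use Nadel-type automorphic forms. After possibly shrinking $\Gamma_1$ further (passing to a deeper congruence subgroup), one produces non-zero $\Gamma_1$-automorphic forms of some weight $k$ that vanish to prescribed high order along every boundary component. These furnish global sections of $kK_{\overline{X_1}} - mD_1$ for some $m>0$, and, combined with Mumford's asymptotic description of $\omega_B$ near $D_1$ (Poincar\'e-type singularities in the directions transverse to $D_1$), allow one to manufacture a continuous Hermitian length function $H$ on $T_{\overline{X_1}}$ which vanishes exactly along $D_1$ and has negative holomorphic sectional curvature wherever it is smooth and nondegenerate.

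The main obstacle is verifying that $H \leq C \cdot F_{X_1}$ uniformly near the boundary, including in the directions tangent to $D_1$ along which $\omega_B$ itself degenerates. This demands a careful local analysis in the toroidal coordinates along each boundary stratum: the extra positivity supplied by the vanishing automorphic forms is exactly what compensates for the tangential degeneracy of the Bergman metric. Once this comparison is in place, applying Ahlfors--Schwarz with $H$ and invoking Kobayashi--Kiernan completes the argument, and the resulting $\Gamma_1$ is the desired subgroup.
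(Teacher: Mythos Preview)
Your plan is essentially the paper's: damp the Bergman metric by a power of $\|s\|_h$ for a section $s$ of $l(K_{\overline{X_1}}+D_1)$ vanishing to high order along $D_1$, then run an Ahlfors--Schwarz argument. Two points should be corrected, though. First, the detour through hyperbolic imbedding and Kobayashi--Kiernan is both unnecessary and slightly off: your $H$ vanishes on $D_1$, so it is not a length function in the Kiernan sense, and in any case the comparison you need is with $F_{\overline{X_1}}$, not $F_{X_1}$. The paper proceeds directly: for any holomorphic $f:\Delta\to\overline{X_1}$ the function $(1-|z|^2)^2 f^*\tilde g$ attains its maximum off $f^{-1}(\{\psi=0\})\supset f^{-1}(D_1)$, and the curvature estimate there gives $f^*\tilde g\le g_P$, hence $d_{\overline{X_1}}\ge d_{\tilde g}$ with no mention of $F_{X_1}$. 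Second, the ``tangential degeneracy'' you flag as the main obstacle is a red herring: only the \emph{upper} bound $g=O(g_P)$ (Poincar\'e growth, from Mumford's good-metric theory) is used, and solely to verify that $\psi\cdot g$ extends continuously by zero across $D_1$; no lower bound on $\omega_B$ near the boundary is ever needed. Finally, note that the paper does this pointwise---one pseudo-metric $\tilde g_x$ per $x\in X_1$, using the Hwang--To result to produce a section nonvanishing at $x$---rather than building a single global $H$; this keeps the curvature computation (which uses $i\partial\bar\partial\log\|s\|_h^2 = \mathrm{Ricci}(g)$) clean.
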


As a corollary, we obtain a new simple proof of the main result of \cite{Nad}, where it is proved that $\overline{X_1}$ is Brody hyperbolic modulo $D_1$: the image $f(\bC)$ of any non-constant holomorphic map $f: \bC \to \overline{X_1}$ is contained in $D_1$.

While Nadel's proof \cite{Nad} was based on Nevanlinna theory, our proof consists in using the fundamental distance decreasing property of the Kobayashi pseudo-metric mentioned above. To prove Theorem \ref{hypquot}, we will construct pseudo-distances on $\overline{X_1}$ for which holomorphic maps from the unit disc are distance decreasing.

As an application, we study Siegel modular varieties from this point of view. Their geometry has attracted a lot of attention (see \cite{smv} for a survey). In particular, the birational geometry of $\mathcal{A}_g$ and $\mathcal{A}_g(n)$, the moduli spaces of principally polarized abelian varieties with level structures, has been extensively investigated. The Kodaira dimension of compactifications $\overline{\mathcal{A}_g(n)}$ has been studied by Tai, Freitag, Mumford, Hulek and others, proving the following result.

\begin{thm}[Tai, Freitag, Mumford, Hulek]
$\overline{\mathcal{A}_g(n)}$ is of general type for the following values of $g$ and $n\geq n_0$:

$$\begin{tabular}{l|ccccccc}
  \hline
  g &  $1$ & $2$ & $3$ & $4$ & $5$ & $6$ & $\geq 7$ \\
  \hline
  $n_0$ & $7$ & $4$ & $3$ & $2$ & $2$ & $2$ & $1$  \\
  \hline
\end{tabular}$$
\end{thm}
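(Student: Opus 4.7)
The plan is to realize $\overline{\mathcal{A}_g(n)}$ as a smooth projective toroidal compactification of the arithmetic quotient $\bH_g/\Gamma_g(n)$ of the Siegel upper half space, and to exhibit enough pluricanonical sections to conclude that the canonical divisor is big. For $n \geq 3$ the principal congruence subgroup $\Gamma_g(n)$ acts freely on $\bH_g$, so the open part is smooth; I would then pick a $\Gamma_g(n)$-admissible polyhedral decomposition at each cusp of the Satake boundary so that the boundary becomes a simple normal crossing divisor and the resulting compactification has at worst canonical singularities --- this is Tai's key technical input.

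The starting point is Mumford's formula for the canonical class:
\[
K_{\overline{\mathcal{A}_g(n)}} \sim (g+1)\l - D,
\]
where $\l$ denotes the determinant of the Hodge bundle and $D$ the reduced boundary divisor. Sections of $mK$ thus correspond to Siegel modular forms of weight $m(g+1)$ and level $n$ vanishing to order at least $m$ along every boundary component, and the heart of the proof is the construction of sufficiently many such forms. The essential input is Tai's \emph{low slope} theorem: by averaging theta series attached to suitable positive definite quadratic forms one produces cusp forms whose slope --- the ratio of the order of vanishing along $D$ to the weight --- strictly exceeds $1/(g+1)$, so that taking tensor powers yields honest pluricanonical sections. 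Combined with dimension asymptotics for spaces of Siegel modular forms, this gives $h^0(mK) \sim m^{\dim \mathcal{A}_g}$, which is bigness.

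The case-by-case structure of the table then reflects how much slack the low slope estimate leaves. For $g \geq 7$ the theta-series construction works already at level one, giving $n_0 = 1$. For $4 \leq g \leq 6$ one passes to $n = 2$ in order to enlarge the space of modular forms relative to the boundary, following Mumford; for $g = 3$ one needs $n \geq 3$ so that $\Gamma_g(n)$ is neat and the ramification contribution coming from elliptic fixed points can be cleanly removed. For $g = 1, 2$ one falls back to classical dimension formulas for elliptic and genus-two Siegel modular forms, where explicit bounds due to Igusa are available and Freitag's extension theorem across elliptic fixed points pins down the sharp value of $n_0$.

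The main obstacle I anticipate is a uniform control of the trade-off between positive and negative contributions: increasing $n$ enlarges the space of modular forms of given weight, but simultaneously multiplies the number of boundary components and aggravates the ramification of $\overline{\mathcal{A}_g(n)} \to \overline{\mathcal{A}_g}$. Tai's low slope inequality is precisely the tool that balances these effects, and verifying it with the optimal $n_0$ for the small values of $g$ is the hardest step; the remaining ingredients --- Mumford's formula, the toroidal resolution, and the dimension asymptotics --- become fairly mechanical once the slope estimate is in hand.
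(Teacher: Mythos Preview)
The paper does not prove this theorem. It is quoted in the introduction as a known result due to Tai, Freitag, Mumford, and Hulek (with a pointer to the survey \cite{smv}) and serves purely as motivation for the hyperbolicity questions treated afterwards; there is no argument in the text to compare your proposal against.

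That said, your sketch is a fair outline of how the original literature proofs go: Mumford's formula $K \sim (g+1)L - D$ on a toroidal compactification, Tai's production of low-slope cusp forms via theta series, and the resulting bigness of $K$. A couple of attributions are slightly off --- the level-one bound $g \geq 7$ is Mumford's sharpening of Tai's original $g \geq 9$, and the small-$g$ entries (especially $g=2$, which is Hulek's contribution) rely on rather specific constructions, not merely on neatness of $\Gamma_g(n)$ or classical dimension formulas. As a roadmap your proposal is broadly faithful to the actual arguments, but none of this is carried out in the present paper.
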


Therefore it is very natural to study hyperbolicity of these spaces in the light of the Green-Griffiths-Lang conjecture
\begin{conjecture}[Green-Griffiths, Lang]
Let $X$ be a projective variety of general type. Then there exists a proper algebraic subvariety $Y \subset X$ such that $X$ is hyperbolic modulo $Y$.
\end{conjecture}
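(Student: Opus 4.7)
The statement is the celebrated Green--Griffiths--Lang conjecture, which remains open in full generality; for an arbitrary projective variety of general type, no complete proof is known. The best I can offer here is a proposal along the lines pursued by Bogomolov, Demailly, Siu, Diverio--Merker--Rousseau and others, indicating where the essential obstruction lies.

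The plan is to use the theory of global jet differentials. First, one works on the Demailly--Semple tower $\pi_k \colon X_k \to X$ of $k$-jet bundles, equipped with its tautological line bundles $\cO_{X_k}(1)$, and one produces, for $k$ large enough, a global section $\sigma \in H^0\bigl(X_k, \cO_{X_k}(m) \otimes \pi_k^* A^{-1}\bigr)$ for some ample line bundle $A$ on $X$ and some $m \gg 0$. Such a $\sigma$ is a jet differential of order $k$ and weight $m$ vanishing on an ample divisor. By the fundamental vanishing theorem of Green--Griffiths and Siu--Demailly, the $k$-jet lift $f_{[k]} \colon \bC \to X_k$ of any nonconstant entire curve $f \colon \bC \to X$ must then factor through the base locus of $\sigma$. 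Next, one collects enough such differentials that the intersection of their base loci, pushed forward to $X$, is a proper subvariety $Y \subset X$ containing the image of every entire curve; by Brody's lemma this already yields Brody hyperbolicity modulo $Y$, and an Ahlfors--Schwarz argument in the spirit of Theorem \ref{hypquot} should upgrade this to Kobayashi hyperbolicity modulo $Y$.

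The existence of at least one nonzero jet differential of suitable order, for $X$ of general type, follows from Demailly's holomorphic Morse inequalities applied to the weighted tautological bundles on $X_k$, together with an asymptotic cohomology estimate; this step is available in the literature for $k \gg \dim X$. The genuinely hard part, and the reason the conjecture remains open, is the control of the base locus: one must produce enough independent jet differentials so that their common zero locus, rather than being all of $X_k$ or the lift of some distinguished divisor, descends to a proper algebraic subset of $X$. No general principle is known to force this. Outside specific settings such as generic hypersurfaces of very high degree, varieties admitting a negatively curved Finsler metric, or the arithmetic quotients treated in the present paper, the base-locus problem is intractable with current methods. A successful attack on the general conjecture would require either a substantial new input in the asymptotic cohomology of jet bundles, or a curvature-theoretic construction of an intrinsic pseudometric on $X$ whose degeneracy locus can be controlled globally; Theorem \ref{hypquot} should be viewed as a first, very partial step in the latter direction.
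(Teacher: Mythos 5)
You are correct that this is an open conjecture: the paper states it only as motivation (it is labeled \texttt{\textbackslash begin\{conjecture\}}) and offers no proof, so there is nothing in the paper to compare your proposal against. Your honest assessment that the statement is unproven, together with your accurate sketch of the jet-differential strategy and of the base-locus obstruction that blocks it in general, is exactly the right thing to say; the only caution is that this surrounding text must not be mistaken for a claimed proof.
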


In this setting, using the theory of automorphic forms, we can apply the same strategy as above and obtain

\begin{thm}\label{siegel}
$\overline{\mathcal{A}_g(n)}$ is hyperbolic modulo $D:=\overline{\mathcal{A}_g(n)}\setminus \mathcal{A}_g(n)$ if
$n >6g.$
\end{thm}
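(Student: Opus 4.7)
The plan is to specialize the approach of Theorem~\ref{hypquot} to $\bB = \bH_g$ and $\Gamma = \Gamma(n)$, replacing the abstract finite \'etale cover produced there by the explicit principal congruence cover $\mathcal{A}_g(n)$ as soon as $n>6g$. The general strategy is to construct a continuous pseudo-distance $\delta$ on $\overline{\mathcal{A}_g(n)}$ that (i) vanishes on $D$, (ii) is strictly positive on pairs of distinct points not both lying in $D$, and (iii) is dominated by the Kobayashi pseudo-distance $d_{\overline{\mathcal{A}_g(n)}}$. Given such a $\delta$, the distance-decreasing principle applied to holomorphic maps $\Delta \to \overline{\mathcal{A}_g(n)}$ immediately yields hyperbolicity modulo $D$.

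The concrete input I would use is a large family of Siegel cusp forms of common weight $k$ for $\Gamma(n)$, regarded as holomorphic sections of $\lambda^{k}\otimes\cO(-D)$ on a fixed toroidal compactification $\overline{\mathcal{A}_g(n)}$, where $\lambda$ denotes the Hodge line bundle. For $n>6g$, Nadel-type constructions using products of theta constants of level $n$ (or Poincar\'e--Eisenstein series of sufficiently high weight) should produce cusp forms $f_0,\dots,f_N$ whose common zero locus in $\overline{\mathcal{A}_g(n)}$ equals $D$ and whose ratios separate points and tangent directions of $\mathcal{A}_g(n)$. Equivalently, the meromorphic map $\phi=[f_0:\cdots:f_N]\colon\overline{\mathcal{A}_g(n)}\dashrightarrow\bP^N$ is an embedding off $D$ with base locus contained in $D$. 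One then defines $\delta$ as the pullback under $\phi$ of the Fubini--Study distance on $\bP^N$: the vanishing of the $f_i$ on $D$ gives (i) and makes $\delta$ extend continuously to the boundary, while the separation property of the $f_i$ gives (ii).

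The remaining point is the domination $\delta \le d_{\overline{\mathcal{A}_g(n)}}$ in (iii). Here the Bergman geometry of $\bH_g$ enters decisively: viewed upstairs on $\bH_g$, the singular Hermitian metric $\sum_i|f_i|^{2/k}$ on $\lambda^{-1}$ is bounded above by the Bergman metric, which is proportional to the Kobayashi infinitesimal metric on $\bH_g$. An Ahlfors--Schwarz argument applied to $\phi\circ u$, for any holomorphic $u\colon \Delta\to\overline{\mathcal{A}_g(n)}$, then produces the required domination, precisely as in the proof of Theorem~\ref{hypquot}.

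The main obstacle is the quantitative arithmetic step underlying the construction of the $f_i$: proving that for $n>6g$ the cusp forms of level $n$ do separate points and tangent directions on $\mathcal{A}_g(n)$. This is a refinement of the construction of \cite{Nad}, sharpened just enough to handle the strictly stronger (Kobayashi, not merely Brody) hyperbolicity statement; all the remaining steps are direct transcriptions of the argument of Theorem~\ref{hypquot} into the Siegel modular setting.
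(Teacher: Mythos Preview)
Your proposal has the right overall architecture but misses the three concrete inputs that actually produce the bound $n>6g$, and it over-engineers the construction in a way that creates an obstacle the paper simply does not face.

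First, the numerical content. The bound $6g$ in the paper comes from combining three explicit constants: the holomorphic sectional curvature bound $\gamma = \dfrac{2}{g(g+1)}$ for the Bergman metric on $\bH_g$; Weissauer's theorem, which for every point of $\bH_g$ supplies a modular form for the \emph{full} group $Sp(2g,\bZ)$ of order $o(F)\ge \dfrac{1}{12+\epsilon}$ (i.e.\ vanishing-at-infinity divided by weight at least $1/(12+\epsilon)$) not vanishing at that point; and the fact that $\pi:\overline{\mathcal{A}_g(n)}\to\overline{\mathcal{A}_g}$ ramifies to order at least $n$ along the boundary. Pulling back Weissauer's form and using $K_{\overline X}=(g+1)L-D$ one checks the hypothesis of Theorem~\ref{pseudo}: the section lies in $l(K_{\overline X}+D)$ with $l=k/(g+1)$ and vanishes on $D$ to order $mn$, and $mn\ge \dfrac{(6g+1)k}{12+\epsilon}>\dfrac{kg}{2}=\dfrac{l}{\gamma}$. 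None of these ingredients appears in your sketch, and without them the threshold $6g$ is unmotivated.

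Second, you ask for much more than is needed. Theorem~\ref{pseudo} only requires, for each $x\in X$, \emph{one} section of $l(K_{\overline X}+D)$ non-vanishing at $x$ and vanishing on $D$ to sufficiently high order; the Kobayashi pseudo-metric is then non-degenerate at every interior point, which is exactly hyperbolicity modulo $D$. Your plan instead asks for a full projective embedding by cusp forms of level $n$ (separating points and tangents off $D$), which is a genuinely harder statement and is the ``main obstacle'' you yourself flag. The paper sidesteps this entirely by working pointwise with Weissauer's forms for the full group and pulling back.

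Third, your Ahlfors--Schwarz step is not correctly formulated. Saying that $\sum_i|f_i|^{2/k}$ is bounded by the Bergman metric on $\bH_g$ does not control a holomorphic $u:\Delta\to\overline{\mathcal{A}_g(n)}$ that meets $D$, where the Bergman metric blows up. The paper's argument uses curvature, not boundedness: one forms $\psi\cdot g$ with $\psi=\|s\|_h^{2(\gamma-\epsilon)/l}$, shows it extends continuously across $D$ (this is where Mumford's ``good'' metric and the vanishing order $m>l/\gamma$ enter), and then runs the maximum-principle computation using $\mathrm{Ricci}(g)=-g$ and sectional curvature $\le -\gamma$. Your Fubini--Study pullback would need its own curvature estimate, which you do not supply.
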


As a corollary we have
\begin{thm}\label{siegel3}
If $A$ is a principally polarized abelian variety of dimension $g \geq 1$  which is defined and non-constant over $k$, a complex function field of genus $\leq 1$, then $A$ does not admit a level-$n$ structure for $n >6g.$
\end{thm}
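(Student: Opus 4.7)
The plan is to reduce Theorem \ref{siegel3} to Theorem \ref{siegel} via the standard dictionary between moduli points over a function field and morphisms from the associated curve, then use the genus $\leq 1$ assumption to upgrade to an entire curve, and finally invoke Kobayashi hyperbolicity modulo the boundary. Assuming for contradiction that a nonconstant $A/k$ of dimension $g$ admits a level-$n$ structure with $n > 6g$, I would produce a non-constant holomorphic map $f : \bC \to \overline{\mathcal{A}_g(n)}$ with $f(\bC) \not\subset D$, which is incompatible with Theorem \ref{siegel}.

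First I would carry out the moduli-theoretic translation. Let $C$ be the smooth projective complex curve with $\bC(C) = k$, so by hypothesis $g(C) \leq 1$. Because $n > 6g \geq 6$ we have $n \geq 7$, so $\mathcal{A}_g(n)$ is a fine moduli space and the pair (abelian variety, level-$n$ structure) corresponds to a morphism $\mathop{\rm Spec} k \to \mathcal{A}_g(n)$. This spreads out to a morphism from some non-empty Zariski-open $U \subset C$ into $\mathcal{A}_g(n)$, and the valuative criterion of properness, applied to the smooth curve $C$ and the projective variety $\overline{\mathcal{A}_g(n)}$, extends it uniquely to a morphism $\phi : C \to \overline{\mathcal{A}_g(n)}$. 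The nonconstancy of $A$ over $k$ forces $\phi$ to be non-constant, and $\phi(U) \subset \mathcal{A}_g(n)$ forces $\phi(C) \not\subset D$.

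Next I would lift $\phi$ to an entire curve using $g(C) \leq 1$. If $g(C) = 0$, identify $C \simeq \bP^1$ and restrict $\phi$ along an affine chart $\bC \hookrightarrow \bP^1$ whose point at infinity is chosen inside $\phi^{-1}(D)$ when the latter is non-empty (possible, as it is a proper closed subset of $\bP^1$); this guarantees that the restriction still hits $\overline{\mathcal{A}_g(n)} \setminus D$ somewhere. If $g(C) = 1$, precompose $\phi$ with the universal covering map $\bC \to C$. In either case we obtain a non-constant holomorphic $f : \bC \to \overline{\mathcal{A}_g(n)}$ with $f(\bC) \not\subset D$.

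Finally I would invoke Theorem \ref{siegel}: for $n > 6g$, $\overline{\mathcal{A}_g(n)}$ is Kobayashi hyperbolic modulo $D$, and since the Kobayashi pseudo-distance on $\bC$ is identically zero and $f$ is distance-decreasing, every pair satisfies $d_{\overline{\mathcal{A}_g(n)}}(f(z_0), f(z_1)) = 0$; hyperbolicity modulo $D$ then forces any such pair of distinct values to lie entirely in $D$, so the existence of a single $z_0$ with $f(z_0) \notin D$ would make $f$ constant, contradicting the previous step. The only genuinely non-trivial ingredient is Theorem \ref{siegel} itself; the remaining moduli-theoretic and Kobayashi arguments are standard, the mildest technical point being the extension of $\phi$ to the compactification, which is automatic for smooth curves mapping to proper targets.
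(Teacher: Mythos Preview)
Your argument is correct and follows exactly the approach sketched in the paper: reduce to Theorem \ref{siegel} by producing, from a nonconstant $A/k$ with level-$n$ structure, a nonconstant holomorphic map $\bC \to \overline{\mathcal{A}_g(n)}$ whose image is not contained in $D$. You have simply spelled out the standard moduli-theoretic and Kobayashi steps that the paper leaves implicit.
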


In particular, this improves the previous bound, $n\geq \max (\frac{1}{2}g(g+1), 28)$, obtained by Nadel in \cite{Nad}.

\subsubsection*{Acknowledgments}
The author is grateful to Carlo Gasbarri and Xavier Roulleau for several interesting and fruitful discussions and remarks. He also thanks CNRS for the opportunity to spend a semester in Montreal at the UMI CNRS-CRM and the hospitality of UQAM-Cirget where part of this work was done.

\section{Distance decreasing pseudo-distances}
Let $X:=\bB/ \Gamma$ be a quotient of a bounded symmetric domain by an arithmetic subgroup $\Gamma \subset \operatorname{Aut}(\bB)$. We first give a criterion to ensure the existence of a pseudo-distance $\tilde{g}$ on $\overline{X}$ non-degenerate at a given $x \in X$ and satisfying the distance decreasing property: every holomorphic map $f: (\Delta, g_P) \to (\overline{X}, \tilde{g})$ from the unit disc, equipped with the Poincar\'e metric $g_P$, is distance decreasing i.e.
$$f^*\tilde{g} \leq g_P.$$

Let $g$ denote the Bergman metric on $\bB$ satisfying the K\"ahler-Einstein property $\operatorname{Ric}(g)=-g$. Let $\gamma >0$ be a rational number such that $g$ has holomorphic sectional curvature $\leq -\gamma$. We want to construct a continuous function $\psi: \overline{X} \to \bR^+$ such that $\psi.g$ defines a pseudo-metric on $\overline{X}$.

\begin{thm}\label{pseudo}
Let $x \in X$ and suppose there is a section $s \in H^0(\overline{X}, l(K_{\overline{X}}+D))$ such that
\begin{itemize}
\item $s(x) \neq 0$
\item $s$ vanishes on $D$ with multiplicity $m > \frac{l}{\gamma}.$
\end{itemize}
Then there exists a pseudo-metric $\tilde{g}$ on $\overline{X}$ non-degenerate at $x$ satisfying the distance decreasing property. In particular, the Kobayashi pseudo-metric on $\overline{X}$ is non-degenerate at $x$.
\end{thm}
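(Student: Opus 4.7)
The plan is to construct a pseudo-Hermitian form $\tilde g=\psi^\alpha g$ on $\overline X$, where $\psi\colon\overline X\to\bR_{\ge 0}$ is built from $s$ together with the Bergmann data, and $\alpha>0$ is a free parameter to be chosen in an interval whose non-emptiness is exactly the hypothesis $m>l/\gamma$. First, equip $K_X$ with the metric $h_K$ dual to the Bergmann volume form $\omega_g^n$, so that the Einstein condition $\mathrm{Ric}(g)=-g$ gives $\Theta(K_X,h_K)=\omega_g$. On $\cO(D)$ take the singular metric $h_D:=1/|\sigma_D|^2$ associated to the canonical defining section $\sigma_D$, whose Chern curvature is the Lelong current $2\pi[D]$. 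The product $h_L:=h_K^{\otimes l}\otimes h_D^{\otimes l}$ is then a singular Hermitian metric on $L=l(K_{\overline X}+D)$ whose Chern curvature on $X$ is simply $l\omega_g$: the boundary degeneracy of $h_K$ cancels the pole of $h_D$. Define
\[\psi := |s|_{h_L}^{2/l},\]
a continuous non-negative function on $\overline X$. In local coordinates near a component $D=\{z_1=0\}$, using the standard asymptotic $\det(g_{i\bar j})\sim 1/(|z_1|^2(\log|z_1|)^2)$ of the Bergmann metric, one reads $\psi\sim |z_1|^{2m/l}(\log|z_1|)^2$, so $\psi$ extends continuously by zero on $D$ while $\psi(x)>0$ because $s(x)\ne 0$.

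Next, introduce $\alpha>0$ and set $\tilde g:=\psi^\alpha g$. Two competing constraints determine the admissible range of $\alpha$. For $\tilde g$ to extend continuously across $D$ as a semi-positive Hermitian form, the problematic direction is $\partial/\partial z_1$, where $g_{1\bar 1}$ blows up; from $\psi^\alpha g_{1\bar 1}\sim |z_1|^{2\alpha m/l-2}(\log|z_1|)^{2\alpha-2}$ one needs $\alpha>l/m$. For the holomorphic sectional curvature, the standard formula for a conformal change,
\[H_{\tilde g}(v) \;=\; \psi^{-\alpha}\!\left[H_g(v)-\frac{(i\partial\bar\partial\log\psi^\alpha)(v,\bar v)}{|v|_g^2}\right],\]
combined with $i\partial\bar\partial\log\psi^\alpha=-\alpha\omega_g$ on $X$ (an immediate consequence of $\Theta(L,h_L)|_X=l\omega_g$ and Poincar\'e--Lelong for $s$) and the hypothesis $H_g\le -\gamma$, yields $H_{\tilde g}(v)\le (\alpha-\gamma)/\psi^\alpha$. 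For this to be bounded above by a strictly negative constant on $\overline X$, one needs $\alpha<\gamma$.

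The two ranges overlap if and only if $l/m<\gamma$, i.e.\ $m>l/\gamma$, which is precisely the hypothesis. Pick any such $\alpha$; then $\tilde g$ is a continuous pseudo-Hermitian form on $\overline X$ with $\tilde g(x)>0$ and holomorphic sectional curvature bounded above by some $-c<0$ on $X$. Gauss--Codazzi applied to any holomorphic $f\colon\Delta\to\overline X$ bounds the Gauss curvature of $f^*\tilde g$ by $H_{\tilde g}$ along $f_*\partial_z$, so the Ahlfors--Schwarz lemma gives $f^*\tilde g\le c^{-1}g_P$; rescaling $\tilde g$ by $c$ produces the required distance-decreasing property, which then transfers the non-degeneracy at $x$ to the Kobayashi pseudo-metric. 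The step most at risk of slippage is the sharp calibration of the two bounds on $\alpha$: this depends both on using the singular metric $h_D=1/|\sigma_D|^2$, so that no bounded ``noise'' term from a smooth metric on $\cO(D)$ spoils the curvature estimate, and on the precise boundary asymptotic of $\psi$ coming from the $\log^2$-behaviour of the Bergmann volume form at the cusps; in the normal-crossings case the same analysis is run component by component with the identical threshold.
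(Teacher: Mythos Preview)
Your argument is correct and is essentially the paper's own proof, just reparameterized: your $\psi^\alpha$ with $\alpha\in(l/m,\gamma)$ is exactly the paper's $\psi=\|s\|_h^{2(\gamma-\epsilon)/l}$ with $\alpha=\gamma-\epsilon$, the continuity across $D$ and the curvature bound are obtained from the same boundary asymptotics and the K\"ahler--Einstein identity, and the paper simply carries out the Ahlfors--Schwarz maximum-principle step by hand rather than quoting it. The only cosmetic difference is that the paper packages the metric on $K_{\overline X}+D$ via Mumford's ``good'' extension rather than writing $h_K\otimes h_D$ explicitly, but these give the same curvature on $X$ and the same growth near $D$.
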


We will prove this theorem in $3$ steps. 

From \cite{mum}, we know that the Bergman metric induces a \emph{good} singular metric $h:=(\det g)^{-1}$ on $K_{\overline{X}}+D$. Let $x \in X$ and $s$ be as given above. Let $0< \epsilon$ be such that
$$\gamma-\epsilon > \frac{l}{m}.$$
We define
$$\psi:= ||s||_h^{\frac{2(\gamma-\epsilon)}{l}}.$$

Despite the singularities of $h$, we have
\begin{proposition}
$\psi$ is a continuous function on $\overline{X}$ vanishing on $D$.
\end{proposition}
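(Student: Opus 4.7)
The plan is to reduce the claim to an elementary asymptotic inequality by combining Mumford's explicit description of the singular metric $h$ with the vanishing condition on $s$. Away from $D$ the Bergmann metric is smooth, so $h=(\det g)^{-1}$ is a smooth Hermitian metric on $K_{\overline{X}}+D$ over $X$. Consequently $\|s\|_h$, and hence $\psi$, is a continuous non-negative function on $X$. The real content of the proposition is thus continuity at a point of $D$ together with vanishing there.

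To handle a point $p\in D$, I would choose local coordinates $(z_1,\dots,z_n)$ on $\overline{X}$ near $p$ in which $D=\{z_1\cdots z_k=0\}$; this is possible because $\overline{X}$ is a smooth toroidal compactification and $\Gamma$ is neat. The Bergmann metric acquires Poincar\'e-type singularities along $D$: in such coordinates $g_{i\bar i}$ is comparable to $|z_i|^{-2}(\log|z_i|)^{-2}$ for $1\le i\le k$, with smooth bounded behaviour in the remaining directions. Working in the logarithmic frame $\eta := \frac{dz_1\wedge\cdots\wedge dz_n}{z_1\cdots z_k}$ of $K_{\overline{X}}+D$, Mumford's estimates from \cite{mum} then give
$$\|\eta\|_h^2 \;\asymp\; \prod_{i=1}^k (\log|z_i|)^{2},$$
up to a smooth bounded positive factor. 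Writing $s = f\cdot\eta^{\otimes l}$ with $f$ holomorphic, the hypothesis that $s$ vanishes to order $m$ along each component of $D$ translates into $z_i^m\mid f$ for $i=1,\dots,k$, so $|f|\le C\prod_{i=1}^k|z_i|^m$ near $p$.

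Combining these two estimates gives
$$\psi \;=\; \bigl(|f|^{2}\,\|\eta\|_h^{2l}\bigr)^{(\gamma-\epsilon)/l} \;\le\; C'\prod_{i=1}^k |z_i|^{\,2m(\gamma-\epsilon)/l}\,\bigl|\log|z_i|\bigr|^{\,2(\gamma-\epsilon)}.$$
Setting $\alpha := 2m(\gamma-\epsilon)/l$, the assumption $\gamma-\epsilon>l/m$ yields $\alpha>2$, and in any case $\alpha>0$. Since $|z|^{\alpha}|\log|z||^{\beta}\to 0$ as $z\to 0$ for every $\alpha>0$ and every real $\beta$, the right-hand side tends to $0$ as $z$ approaches $D$. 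Therefore $\psi$ extends continuously to $\overline{X}$ by setting $\psi|_D=0$, which is the desired conclusion.

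The only non-elementary ingredient is the asymptotic $\|\eta\|_h^2 \asymp \prod (\log|z_i|)^2$, which is exactly Mumford's goodness of the Bergmann metric \cite{mum}. I expect the careful justification of this asymptotic at a general boundary point to be the main obstacle: one must use neatness of $\Gamma$, the local toric structure of the toroidal compactification, and Mumford's explicit form of the metric in adapted coordinates to make the comparison rigorous. Once it is in hand, the rest of the proof is a standard polynomial-beats-logarithm estimate, and the stronger inequality $\gamma-\epsilon>l/m$ (rather than merely $\gamma-\epsilon>0$) is not needed for continuity alone; it will be used in the subsequent steps to control the curvature of $\psi\cdot g$.
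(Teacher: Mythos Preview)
Your argument is correct and follows essentially the same route as the paper: both invoke Mumford's goodness of the Bergmann metric on $K_{\overline X}+D$ to bound $\|s\|_h$ near $D$ by a product of powers of $|z_i|$ times polylogarithmic factors, and then observe that the positive power coming from the vanishing order $m$ kills the logarithms. Your write-up is simply a more explicit unpacking of the paper's one-line estimate $\|s\|_h=O(|z_1\cdots z_k|^m\log^{2N}|z_1\cdots z_k|)$, and your closing remark that only $\alpha>0$ (not $\alpha>2$) is needed here is accurate and anticipates exactly how the stronger inequality is used in the next step.
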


\begin{proof}
We know \cite{mum} that $K_{\overline{X}}+D$ is a \emph{good extension} of $K_{\overline{X}}$ with the metric $h$. This implies that near $D$, which locally is given by $\{z_1 \dots z_k =0\}$, we have
\begin{equation}\label{good}
||s||_h=O(|z_1\dots z_k|^m \log^{2N} |z_1\dots z_k|),
\end{equation}
which suffices to obtain the conclusion. 
\end{proof}

The second step is
\begin{proposition}
$\psi \cdot g$ is a continuous pseudo-metric on $\overline{X}$ vanishing on $D$.
\end{proposition}
\begin{proof}
We know that $g$ has Poincar\'e growth near $D$ (Lemma 2.1 \cite{Nad}) i.e.
$$g=O(g_P),$$
where $$g_P:=\sum_{i=1}^{k} \frac{dz_i\otimes d\overline{z}_i}{|z_i|^2\log^2|z_i|^2}+\sum_{i=k+1}^{n} dz_i\otimes d\overline{z}_i$$ is the metric obtained by taking the Poincar\'e metric on the punctured disc.
Therefore if we combine this with (\ref{good}), we have
$$||s||_h^{\frac{2(\gamma-\epsilon)}{l}} \cdot g_{ij}=O(|z_1\dots z_k|^{\frac{2m(\gamma-\epsilon)}{l}-2} \log^{\alpha} |z_1\dots z_k|).$$
This estimate clearly implies the proposition.
\end{proof}

Finally as a third step, we are in position to construct a distance decreasing pseudo-distance which finishes the proof of Theorem \ref{pseudo}.
\begin{proposition}
There exists a constant $\beta>0$ such that $\tilde{g}:=\beta \cdot \psi \cdot g$ is a distance decreasing pseudo-distance: for any holomorphic map $f: \Delta \to \overline{X}$ from the unit disc equipped with the Poincar\'e metric $g_P$, we have
$$f^*\tilde{g}\leq g_P.$$
\end{proposition}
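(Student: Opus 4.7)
The plan is to pull back $\tilde{g}$ to the unit disc and apply a generalized Ahlfors--Schwarz lemma for continuous pseudo-metrics, after first verifying that, for an appropriate choice of $\beta$, the holomorphic sectional curvature of $\tilde{g}$ is bounded above by $-1$ on the locus where it is positive. The key input is that the multiplicative factor $\psi$ produces just enough extra negative curvature, and this is exactly where the choice $\gamma - \epsilon > l/m$ enters through the defining formula $\psi = \|s\|_h^{2(\gamma-\epsilon)/l}$.

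Let $f: \Delta \to \overline{X}$ be a nonconstant holomorphic map. If $f(\Delta) \subset D$, then $f^*\tilde{g} \equiv 0$ because $\psi$ vanishes on $D$, and the inequality $f^*\tilde{g} \leq g_P$ holds automatically. Otherwise $Z := f^{-1}(D \cup \text{div}(s))$ is a discrete subset of $\Delta$, and on $\Delta \setminus Z$ the map $f$ takes values in $X \setminus \text{div}(s)$, where $\tilde{g}$ is a smooth honest Hermitian metric.

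Writing $f^*g = G\,|dz|^2$ and $f^*\tilde{g} = u\,|dz|^2$ with $u = \beta\,(f^*\psi)\,G$, I would compute $(\log u)_{z\bar{z}} = (\log f^*\psi)_{z\bar{z}} + (\log G)_{z\bar{z}}$. Since $h$ is a metric on $K_{\overline{X}}+D$ whose curvature form on $X$ is the K\"ahler form $\omega_g$ (this uses $\text{Ric}(g) = -g$), one has $i\partial\bar\partial \log \|s\|_h^2 = -l\,\omega_g$ away from $\text{div}(s)$, and hence $i\partial\bar\partial \log \psi = -(\gamma-\epsilon)\,\omega_g$. Pulling back gives $(\log f^*\psi)_{z\bar{z}} = -(\gamma-\epsilon)\,G$. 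The bound on the holomorphic sectional curvature of $g$ translates, via the standard identification of the Gauss curvature of $f^*g$ with the HSC of $g$ in the direction $f'(z)$, into $(\log G)_{z\bar{z}} \geq \gamma\,G$. Summing yields $(\log u)_{z\bar{z}} \geq \epsilon\,G$, so the holomorphic sectional curvature of the pseudo-metric $u\,|dz|^2$ satisfies
\[
-\frac{(\log u)_{z\bar{z}}}{u} \;\leq\; -\frac{\epsilon}{\beta\,f^*\psi} \;\leq\; -\frac{\epsilon}{\beta M},
\]
where $M := \max_{\overline{X}}\psi$.

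To finish, I would set $\beta := \epsilon/M$ so that this upper bound becomes $-1$, and then invoke the generalized Ahlfors--Schwarz lemma for continuous pseudo-metrics (as in Kobayashi's \emph{Hyperbolic Complex Spaces}): the pseudo-metric $f^*\tilde{g}$ is continuous on $\Delta$ (by the previous proposition), vanishes on the discrete set $Z$, is smooth and positive on the complement with holomorphic sectional curvature $\leq -1$; hence $f^*\tilde{g} \leq g_P$ throughout $\Delta$. The main technical subtlety is that the curvature hypothesis only holds off $Z$; this is resolved by the standard extension of Ahlfors--Schwarz to pseudo-metrics with a discrete degeneracy locus, which uses exactly the continuity of $f^*\tilde{g}$ established in the preceding proposition.
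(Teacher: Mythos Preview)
Your proof is correct and follows essentially the same route as the paper: the same curvature computation ($i\partial\bar\partial\log\psi = -(\gamma-\epsilon)g$ from K\"ahler--Einstein, together with the sectional curvature bound $\ge \gamma$, yielding the crucial factor $\epsilon$) and the same choice $\beta = \epsilon/\sup\psi$. The only difference is packaging: you invoke the generalized Ahlfors--Schwarz lemma as a black box, whereas the paper carries out the standard maximum-principle proof of that lemma inline by studying $(1-|z|^2)^2 f^*\tilde g$ on a slightly smaller disc.
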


\begin{proof}
Let $$u(z):=(1-|z|^2)^2 \cdot f^*\tilde{g}.$$ Restricting to a smaller disc, we may assume that $u$ vanishes on the boundary of $\Delta$. Take a point $z_0$ at which $u$ is maximum. $z_0 \in \Delta \setminus f^{-1}(\{\psi=0\})$ otherwise $u$ would vanish identically. Therefore at $z_0$ we have
$$i\partial \overline{\partial} \log u(z)_{z=z_0} \leq 0.$$
We have
$$i\partial \overline{\partial} \log u(z)=i\partial \overline{\partial} \log f^*\psi+i\partial \overline{\partial} \log f^*g+i\partial \overline{\partial} \log (1-|z|^2)^2.$$
On $X$, we have
$$i\partial \overline{\partial} \log \psi=(\gamma-\epsilon) \operatorname{Ric}(g)=-(\gamma-\epsilon)g$$
by the K\"ahler-Einstein property.

$g$ has holomorphic sectional curvature $\leq -\gamma$ and therefore
$$i\partial \overline{\partial} \log f^*g \geq \gamma f^*g.$$
Moreover
$$i\partial \overline{\partial} \log (1-|z|^2)^2=\frac{-2}{(1-|z|^2)^2}.$$
 Combining the above inequalities, we obtain
$$ -(\gamma-\epsilon) f^*g(z_0)+\gamma f^*g-\frac{2}{(1-|z_0|^2)^2}\leq 0.$$
So
$$\epsilon/2.f^*g(z_0) \cdot (1-|z_0|^2)^2 \leq 1.$$

Finally, we have
$$u(z)\leq u(z_0)=(1-|z_0|^2)^2 \cdot \beta\psi(f(z_0)) \cdot f^*g(z_0)\leq \frac{2\beta\psi(f(z_0))}{\epsilon}.$$
Therefore if we take
$$\beta=\frac{\epsilon/2}{\sup \psi},$$
we obtain the distance decreasing property.
\end{proof}

\section{Proof of Theorem \ref{hypquot}}
To prove Theorem \ref{hypquot} it is now sufficient to show the existence of an \'etale cover $X_1=\bB/ \Gamma_1$
satisfying the hypothesis of Theorem \ref{pseudo} for every $x \in X_1$.

First we recall the following result
\begin{proposition}[\cite{HTO}, 4.2]\label{hwang}
For any point $x \in X$, there exists a section $$s \in H^0(\overline{X}, (\binom {n+1} {2} +2) (K_{\overline{X}}+D))$$ such that $s(x) \neq 0$ and $s|_D \equiv 0$, where $D:=\overline{X} \setminus X$ and $n=\dim X$.
\end{proposition}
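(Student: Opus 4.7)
The plan is to reinterpret the claim as an effective base-point-freeness statement for an adjoint line bundle, and then apply the Angehrn--Siu theorem. Observe first that a section $s \in H^0(\overline{X}, l(K_{\overline{X}}+D))$ vanishes identically along $D$ if and only if its divisor contains $D$, i.e.\ if and only if $s$ lies in the subspace
$$H^0(\overline{X},\, l(K_{\overline{X}}+D) - D) \;=\; H^0(\overline{X},\, K_{\overline{X}} + (l-1)L),$$
where $L := K_{\overline{X}}+D$ and the equality is the obvious rewriting of $lK_{\overline{X}}+(l-1)D$ as $K_{\overline{X}}+(l-1)L$. With $l = \binom{n+1}{2}+2$, we have $l-1 = \binom{n+1}{2}+1$, and the proposition reduces to showing that the adjoint linear system $|K_{\overline{X}} + (l-1)L|$ has no base point at $x$.

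The second ingredient is the positivity of $L=K_{\overline{X}}+D$. By Mumford's theory of toroidal compactifications of arithmetic quotients, the Bergman metric on $\bB$ descends to a good singular Hermitian metric $h$ on $L$ (this is the metric already used in the proof of Theorem \ref{pseudo}), whose curvature current, on the open part $X$, is a positive multiple of the K\"ahler--Einstein Bergman metric. This implies that $L$ is big and nef on $\overline{X}$ and that its augmented base locus is contained in $D$; in particular every $x \in X$ lies outside $\mathrm{B}_+(L)$.

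The conclusion follows from the Angehrn--Siu effective base-point-freeness theorem: on a smooth projective $n$-fold, for any ample line bundle $L$, the adjoint bundle $K + mL$ is globally generated as soon as $m \ge \binom{n+1}{2}+1$. Applying this with $m = \binom{n+1}{2}+1$ produces a global section of $K_{\overline{X}}+(l-1)L$ nonzero at $x$, which, viewed in $l(K_{\overline{X}}+D)$, vanishes identically on $D$. The main technical obstacle is that in the present situation $L$ is only big and nef, not \emph{a priori} ample; the resolution is to invoke the big-and-nef refinement of Angehrn--Siu, valid off $\mathrm{B}_+(L)$ and resting on Nakamaye's theorem on augmented base loci, the numerical hypotheses at $x$ being supplied by the K\"ahler--Einstein volume estimates for the Bergman metric on $\bB$. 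An alternative route, closer in spirit to the original argument of \cite{HTO}, is to construct $s$ directly by a Poincar\'e--Bergman series centered at $x$ combined with an $L^2$-extension of Ohsawa--Takegoshi type, the integer $\binom{n+1}{2}+2$ then arising as the number of positivity twists of $L$ needed simultaneously to control the singularity of the $L^2$ weight at $x$ and the boundary growth of $h$ along $D$.
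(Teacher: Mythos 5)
The paper does not prove this proposition; it is quoted verbatim from Hwang--To \cite{HTO}, Proposition~4.2, and used as a black box. So there is no internal proof to compare against, and your proposal can only be judged on its own merits as a reconstruction of Hwang--To's argument.

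Your reinterpretation is correct and identifies the right circle of ideas: a section of $l(K_{\overline{X}}+D)$ vanishing on $D$ is the same as a section of $lK_{\overline{X}}+(l-1)D = K_{\overline{X}}+(l-1)L$ with $L=K_{\overline{X}}+D$, and with $l-1=\binom{n+1}{2}+1$ the Angehrn--Siu numerology matches exactly. This genuinely explains where the constant $\binom{n+1}{2}+2$ comes from. You are also right that the essential obstacle is that $L$ is only big and nef, not ample: it is degenerate along the toroidal boundary, so the classical Angehrn--Siu theorem does not apply as stated. Hwang--To's contribution is precisely to carry out the Angehrn--Siu/Nadel multiplier-ideal argument in this degenerate situation, which is why the proposition is a theorem of theirs and not a one-line consequence.

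The gap is that you treat this obstacle as removable by citation. A \lq\lq big-and-nef refinement of Angehrn--Siu, valid off $B_+(L)$ and resting on Nakamaye's theorem'' is not a standard theorem you can invoke; if you want to use it you must prove it. The underlying idea is sound (for $x\notin B_+(L)$, every subvariety $V\ni x$ satisfies $V\not\subset\mathrm{Null}(L)$ by Nakamaye, hence $L^{\dim V}\cdot V\geq 1$, and Nadel vanishing still holds for big and nef twists), but that chain of reasoning is not written down, and neither is the prerequisite claim that $B_+(L)\subset D$, which itself requires an argument using the strict positivity of Mumford's good metric on the open part. Moreover, the aside that \lq\lq the numerical hypotheses at $x$ [are] supplied by the K\"ahler--Einstein volume estimates'' is confused: in the Angehrn--Siu scheme the numerical inputs are the integer intersection numbers $L^{\dim V}\cdot V\geq 1$, not metric volume bounds, and conflating the two suggests the mechanism has not been fully worked out. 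The sketched alternative via Ohsawa--Takegoshi is closer in spirit to what \cite{HTO} actually do, but as written it is a gesture rather than a proof. In short: right target, right numerology, correct identification of the hard point, but the hard point is asserted rather than resolved.
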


Let $l:=\binom {n+1} {2} +2$ and take a positive integer $m$ such that
$m> \frac{l}{\gamma}.$

Now, we have

\begin{proposition}\label{nv}
There exists a covering  $X_1$ of $X$ such that for any $x \in X_1$, there is a section $s \in H^0(\overline{X}_1, l(K_{\overline{X_1}}+D_1))$ vanishing on $D_1:=\overline{X_1} \setminus X_1$ with multiplicity at least $m$ such that $s(x) \neq 0$.
\end{proposition}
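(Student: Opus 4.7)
The plan is to take $X_1 := \bB/\Gamma_1$ for a principal congruence subgroup $\Gamma_1 \subset \Gamma$ of sufficiently high level $N$, and to build the required section by pulling back the section of Proposition \ref{hwang} under the induced morphism of toroidal compactifications. The crucial observation is that although the cover $X_1 \to X$ is \'etale, its extension $\pi: \overline{X_1} \to \overline{X}$ is ramified along the boundary with ramification index growing with $N$, so that a section of $l(K_{\overline{X}}+D)$ vanishing to order one on $D$ pulls back to a section of $l(K_{\overline{X_1}}+D_1)$ vanishing to order at least $N$ on $D_1$.

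First I would construct $\pi$. Starting from a $\Gamma$-admissible collection of fans defining $\overline{X}$ as in \cite{AMRT}, I would take a $\Gamma_1$-admissible refinement so that $\pi$ is induced by a finite toric morphism on each boundary chart, giving a finite surjection $\pi: \overline{X_1} \to \overline{X}$ between smooth projective varieties. The decisive local computation is that passing to $\Gamma_1$ replaces each cusp lattice by a sublattice whose index grows with $N$; in toric coordinates transverse to a boundary component the map $\pi$ then looks like $w \mapsto w^N$, and hence its ramification index along every component of $D_1$ is at least $N$. The logarithmic ramification formula rephrases this as
$$\pi^*(K_{\overline{X}}+D)=K_{\overline{X_1}}+D_1,$$
reflecting $\pi^*\frac{dz}{z}=N\frac{dw}{w}$.

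Having fixed $N \geq m$ and set $\Gamma_1$ accordingly, the production of the desired section is immediate. For any $x \in X_1$, set $\bar{x}:=\pi(x) \in X$ and apply Proposition \ref{hwang} to obtain $s_0 \in H^0(\overline{X}, l(K_{\overline{X}}+D))$ with $s_0(\bar{x}) \neq 0$ and $s_0|_D \equiv 0$. Then $s:=\pi^* s_0 \in H^0(\overline{X_1}, l(K_{\overline{X_1}}+D_1))$ satisfies $s(x) \neq 0$, and a short local computation in the normal form above shows $\ord_{D_1^j}(s) \geq N \cdot \ord_{\pi(D_1^j)}(s_0) \geq N \geq m$ on every boundary component, which is exactly the multiplicity required to feed Theorem \ref{pseudo}.

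The hard part is the toric geometry of \cite{AMRT}: producing compatible $\Gamma$- and $\Gamma_1$-admissible fans and extracting the linear-in-$N$ boundary ramification. Once that is granted, the pullback and the log ramification formula make the rest of the proposition automatic.
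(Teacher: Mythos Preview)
Your proof is correct and follows essentially the same approach as the paper: construct a cover $\pi:\overline{X_1}\to\overline{X}$ ramified to order at least $m$ along the boundary, then pull back the section $s_0$ of Proposition \ref{hwang} and use $\pi^*(K_{\overline{X}}+D)=K_{\overline{X_1}}+D_1$ to land in the right bundle with the required vanishing. The only cosmetic difference is that the paper invokes \cite{mum} directly for the existence of such a ramified cover, whereas you spell it out via principal congruence subgroups and the toric geometry of \cite{AMRT}.
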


\begin{proof}
By \cite{mum}, one can find a covering $X_1$ such that $\pi: \overline{X_1} \to \overline{X}$ is ramified to order at least $m$ along $D_1$. Let $x \in X_1$.
From Proposition \ref{hwang}, there is a section $s_0 \in H^0(\overline{X}, l (K_{\overline{X}}+D))$ such that $s_0(\pi(x)) \neq 0$ and $s_0|_D \equiv 0$. We just have to take
$$s:=\pi^*(s_0) \in H^0(\overline{X_1}, l(K_{\overline{X_1}}+D_1)-mD_1).$$
\end{proof}

As a consequence, we obtain a proof of Theorem \ref{hypquot}.

\begin{proof}
Proposition \ref{nv} and Theorem \ref{pseudo} imply that for any $x \in X_1$, there is a pseudo-distance $\rho$ non-zero at $x$ such that 
$$d_{\overline{X_1}}\geq \rho.$$
This implies that $\overline{X_1}$ is Kobayashi hyperbolic modulo $D_1$.
\end{proof}

\section{Siegel modular varieties}
We will apply Theorem \ref{pseudo} to the particular case of quotients of the Siegel upper half space by principal congruence subgroups.

Here $\bB=\bH_g$, the Siegel upper half space of rank $g$. The arithmetic group $\Gamma$ is $\operatorname{Sp}(2g, \bZ)$ and $
\Gamma(n)$ is the kernel of $\operatorname{Sp}(2g, \bZ) \to \operatorname{Sp}(2g, \bZ/n)$.

The space $\mathcal{A}_g(n):=\bH_g / \Gamma(n)$ is the moduli space of principally polarized abelian varieties with a level-$n$ structure. Recall that if $A$ is an abelian variety of dimension $g$ over a field $k$ of characteristic $0$, a level-$n$ structure is a $2g$-tuple $(x_1,\dots,x_{2g})$ of points in $A(k)$ which generate the subgroup of all $n$-torsion points in $A(\overline{k})$ and form a symplectic basis with respect to the Weil pairing.

It is known that the Bergman metric on $\bH_g$ has holomorphic sectional curvature $\leq \frac{-2}{g(g+1)}$. We can therefore take $$\gamma= \frac{2}{g(g+1)}.$$

We are going to see that in this situation, we can use the theory of automorphic forms to obtain the section needed to apply Theorem \ref{pseudo}.

First, recall (see for example \cite{smv}) that the automorphy factor of modular forms on $\bH_g$ defines a line bundle $L$ on the quotient $X:=\bH_g / \Gamma(n)$, the ($\bQ$-) line bundle of modular forms of weight $1$. If the spaces $\overline{X}=\overline{\mathcal{A}_g(n)}$ are smooth (for example $\Gamma(n)$ is neat when $n \geq 3$), we have
$$K_{\overline{X}}=(g+1)L-D,$$
where $D$ is the boundary divisor.

Let $F$ be a modular form with respect to the full modular group $\operatorname{Sp}(2g, \bZ)$. Then following \cite{Hulek}, we define the order $o(F)$ of $F$ as the quotient of the vanishing order of $F$ at the boundary divided by the weight of $F$.

Particularly useful for us is the following result of Weissauer on the existence of cusp forms of high order which do not vanish at a given point of the Siegel space.

\begin{thm}[\cite{We}]
For every point $\tau \in \bH_g$ and every $\epsilon>0$ there exists a modular form $F$ of order $o(F) \geq \frac{1}{12 + \epsilon}$ which does not vanish at $\tau.$
\end{thm}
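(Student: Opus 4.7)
The plan is to deduce the theorem from a dimension estimate on spaces of Siegel cusp forms of controlled slope, combined with a codimension argument at the evaluation map. Concretely, for each weight $k$ let $V_k \subset M_k(Sp(2g,\bZ))$ denote the subspace of modular forms $F$ whose boundary vanishing order $\text{ord}_D(F)$ satisfies $\text{ord}_D(F) \ge k/(12+\e)$. Equivalently, $V_k = H^0(\overline{\cA_g}, kL - \lceil k/(12+\e)\rceil D)$ on a smooth toroidal model, where $L$ is the line bundle of weight-one modular forms and $D$ is the boundary divisor. The goal is to show that for all $k$ sufficiently large (depending on $\e$ and $\tau$) one has $\dim V_k \ge 2$, after which the evaluation map $\mathrm{ev}_\tau : V_k \to \bC$ has a kernel of codimension at most one, producing an $F \in V_k$ with $F(\tau) \ne 0$.

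The first main step is to show that the $\bQ$-line bundle $L - \tfrac{1}{12+\e} D$ is big on $\overline{\cA_g}$. I would argue this via Hirzebruch--Mumford proportionality: the top self-intersection number $L^{\binom{g+1}{2}}$ is computable (it equals $\prod_{i=1}^{g} \zeta(1-2i)$ times a combinatorial constant on $\cA_g$), and the boundary $D$ can be controlled by comparison with Tai's estimates on the order of vanishing of the sections used in proving general type for large $n$. The critical slope $1/12$ is the analogue of the elementary statement for $g=1$ that $\Delta$ has weight $12$ and vanishes to order $1$ at infinity; for general $g$ one needs the sharp version of this, which is the content of Weissauer's construction. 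Once bigness of $L - \tfrac{1}{12+\e} D$ is established, Riemann--Roch plus Kodaira-type vanishing (or just the definition of bigness on a projective variety) gives $\dim V_k \gtrsim k^{\binom{g+1}{2}}$ as $k \to \infty$.

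The second step is the point separation: since the evaluation $F \mapsto F(\tau)$ is a single linear functional, once $\dim V_k \ge 2$ we can find $F \in V_k$ nonzero at $\tau$. This $F$ has $o(F) = \text{ord}_D(F)/k \ge 1/(12+\e)$ by construction, proving the theorem.

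The main obstacle is step one: pinning down sharp positivity for $L - \tfrac{1}{12+\e} D$, i.e.\ identifying the precise pseudo-effective threshold as $1/12$. This is exactly where Weissauer's work is doing the hard analytic/geometric labour, and the constant $12$ is not visible from soft arguments --- it ultimately comes from the cusp structure of $\overline{\cA_g}$ and an explicit construction (Weissauer uses theta series with pluriharmonic coefficients, which give modular forms whose weight/vanishing ratios can be computed directly). A softer alternative that I would try if the proportionality route proves intractable is to construct the forms as suitable Poincar\'e series $P_{k,m,\tau}(Z) = \sum_{\gamma \in \Gamma_\infty \backslash \Gamma} j(\gamma,Z)^{-k} \varphi_{m,\tau}(\gamma Z)$ attached to a test function $\varphi_{m,\tau}$ concentrated near $\tau$ and producing vanishing of order $\ge m$ at the cusp, then optimise the ratio $m/k$ via convergence estimates; this is less sharp in general but has a chance of reaching the $1/12$ barrier by a careful choice of test function.
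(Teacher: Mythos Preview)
The paper does not prove this theorem; it is quoted from Weissauer's work and used as a black box in the proof of the subsequent proposition on $\mathcal{A}_g(n)$. So there is no argument in the paper to compare your proposal against.

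That said, your proposed argument has a genuine gap in the second step. You claim that once $\dim V_k \ge 2$, the evaluation functional $\ev_\tau : V_k \to \bC$ has kernel of codimension at most one, ``producing an $F \in V_k$ with $F(\tau) \ne 0$''. But codimension \emph{at most} one includes codimension zero: $\ev_\tau$ could vanish identically on $V_k$. Knowing that $\dim V_k$ grows like $k^{\binom{g+1}{2}}$ --- equivalently, that $L - \tfrac{1}{12+\e}D$ is big --- tells you only that the base locus of $|V_k|$ is a proper subvariety for large $k$; it does \emph{not} tell you that a prescribed interior point avoids that base locus. To close the gap you would have to show that the stable base locus of $L - \tfrac{1}{12+\e}D$ is contained in $D$, and that is not obviously softer than the theorem you are trying to prove.

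You seem to sense this, since you note that Weissauer's actual method is constructive (theta series with pluriharmonic coefficients). That is exactly the point: getting the slope $\tfrac{1}{12+\e}$ \emph{together with} non-vanishing at an arbitrary prescribed $\tau$ requires building explicit forms whose value at $\tau$ one can control, not a bare dimension count. Your Poincar\'e-series alternative is closer in spirit to what is needed, since there one arranges non-vanishing at $\tau$ by design; but, as you acknowledge, the convergence and slope analysis is then where all the real work lies, and that is precisely the content of Weissauer's paper.
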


We obtain as a corollary
\begin{proposition}\label{siegel2}
Let $X:=\mathcal{A}_g(n)$, $x\in X$ and suppose $n >6g$. Then there is a section $s \in H^0(\overline{X}, l(K_{\overline{X}}+D))$ such that
\begin{itemize}
\item $s(x) \neq 0$
\item $s$ vanishes on $D$ with multiplicity $m > \frac{l}{\gamma}.$
\end{itemize}
\end{proposition}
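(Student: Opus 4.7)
The strategy is to construct $s$ by pulling back a suitable Siegel modular form from $\overline{\mathcal{A}_g}$ and raising it to an appropriate power so that it lives in the correct line bundle. The key identity $K_{\overline{X}} + D = (g+1) L$ tells us that a section of $l(K_{\overline{X}}+D)$ is the same as a modular form of weight $l(g+1)$ on $\overline{X}$. The hypothesis $n > 6g$ will enter through the ramification of $\pi : \overline{X} \to \overline{\mathcal{A}_g}$, which has order $n$ along every boundary component of $D$ (this is the elliptic picture $X(n) \to X(1)$ generalized to $\bH_g$, and is already built into Mumford's toroidal construction invoked in the previous section).

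First, fix $x \in X$, choose a preimage $\tau \in \bH_g$, and, using $n > 6g$, pick $\epsilon > 0$ so small that
$$ n > \frac{(12+\epsilon) g}{2}. $$
Apply Weissauer's theorem at $\tau$: there exists a modular form $F$ of some weight $k$ with respect to $Sp(2g,\bZ)$, with $F(\tau) \neq 0$ and boundary vanishing order $v$ satisfying
$$ \frac{v}{k} = o(F) \geq \frac{1}{12+\epsilon}. $$
The pullback $\pi^* F$ is a section of $kL$ on $\overline{X}$, nonzero at $x$ and vanishing along $D$ to order $nv$. Set
$$ s := (\pi^* F)^{g+1} \in H^0(\overline{X}, k(g+1)L) = H^0(\overline{X}, k(K_{\overline{X}}+D)), $$
and take $l := k$. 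Then $s(x) \neq 0$ and $s$ vanishes along $D$ to order $m := (g+1) n v$.

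It remains to verify $m > l/\gamma$. With $\gamma = 2/(g(g+1))$ and $l = k$, this inequality becomes
$$ (g+1) n v > \frac{k g (g+1)}{2}, \qquad \text{i.e.,} \qquad 2 n v > k g. $$
Using $v \geq k/(12+\epsilon)$ and $n > (12+\epsilon) g / 2$, we get $2 n v \geq 2 n k / (12+\epsilon) > k g$, as required.

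The one subtle point is the first one: we must know that pulling back a Siegel modular form of boundary order $v$ produces a section of $kL$ on $\overline{X}$ with vanishing exactly $nv$ along each component of $D$, equivalently, that the neat toroidal cover $\pi$ has ramification $n$ along every boundary divisor. For $\Gamma(n)$ with $n \geq 3$ (which is neat, so $\overline{X}$ is smooth with normal crossings boundary) this is classical, and it is exactly what makes the final bound $n > 6g$ tight against Weissauer's slope $1/12$.
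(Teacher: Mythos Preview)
Your proof is correct and follows essentially the same approach as the paper's: pull back a Weissauer cusp form via $\pi:\overline{\mathcal{A}_g(n)}\to\overline{\mathcal{A}_g}$, use that $\pi$ ramifies to order (at least) $n$ along $D$, and then check the numerical inequality $m>l/\gamma$ from $n>6g$ and $o(F)\ge 1/(12+\epsilon)$. The only cosmetic difference is that you raise $\pi^*F$ to the $(g+1)$-th power so that $l=k$ is an honest integer, whereas the paper sets $l=k/(g+1)$ directly (treating $L$ as a $\bQ$-line bundle); the resulting inequality $2nv>kg$ is literally the same in both versions.
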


\begin{proof}
We consider the natural map
$$\pi: \overline{\mathcal{A}_g(n)} \to \overline{\mathcal{A}_g}.$$ This map is ramified to order at least $n$ over the divisor at infinity \cite{Nad}. We choose some $\epsilon>0$ such that $\epsilon < \frac{2}{g}$. Now, we take a modular form given by Weissauer's theorem non-vanishing at $\pi(x)$. In terms of line bundles, it gives a section $\widetilde{s} \in H^0(\overline{\mathcal{A}_g}, kL-mD)$ such that $\frac{m}{k}\geq \frac{1}{12 + \epsilon}$ non-vanishing at $\pi(x)$. We take $s:=\pi^*(\widetilde{s}).$ It gives a section in $H^0(\overline{X}, kL-mnD)$ non-vanishing at $x$, where we make the slight abuse of denoting the line bundles corresponding to the modular forms and the boundary again by $L$ and $D$. Therefore, using the equality $K_{\overline{X}}=(g+1)L-D,$ we have obtained a section in $H^0(\overline{X}, l(K_{\overline{X}}+D))$ with $l=\frac{k}{g+1}$ vanishing on $D$ with multiplicity $mn$. Finally, we observe that
$$mn\geq \frac{(6g+1)k}{12+\epsilon}> \frac{kg}{2}=\frac{l}{\gamma}.$$

\end{proof}

Theorem \ref{siegel} is an immediate corollary of Proposition \ref{siegel2} and Theorem \ref{pseudo}.

\

Theorem \ref{siegel3} is a consequence of Theorem \ref{siegel}, since a non-constant principally polarized abelian variety over $k$, a complex function field of genus $\leq 1$, which admits a level-$n$ structure induces a non-constant holomorphic map $\bC \to \overline{\mathcal{A}_g(n)}$ whose image does not lie in $D$.

\bigskip

\noindent
Erwan Rousseau \\
erwan.rousseau@univ-amu.fr\\ 
Aix Marseille Universit\'e,\\
CNRS, Centrale Marseille, I2M, UMR 7373,\\
13453 Marseille, France

\end{document}